\theoremstyle{plain}
\newtheorem{theorem}{Theorem}
\newtheorem*{main}{Main Theorem}
\newtheorem*{conjecture}{Conjecture}
\theoremstyle{definition}
\newtheorem*{definition}{Definition}
\theoremstyle{remark}
\newtheorem{remark}{Remark}
\begin{document}
\title{Crossing Changes in Closed 3-braid Diagrams}
\author{Chad Wiley}
\address{Emporia State University}
\email{cwiley1@emporia.edu}

\begin{abstract}
A crossing in a knot is nugatory if changing the crossing does not change the knot type.  Using an invariant of certain types of closed 3-braid diagrams, we show that if a closed 3-braid contains a nugatory crossing then its braid index is one or two.  This proves a special case of a conjecture on nugatory crossings due to Xiao-Song Lin.
\end{abstract}

\maketitle

\section{Introduction}\label{S:intro}

This paper is motivated by the following problem:  given a knot, is it possible that a single crossing change can produce a knot which is equivalent to the original?  As stated, this problem is easily solved.  We can readily produce examples of knots which have this property (see figure \ref{Fi:example}).

\begin{figure}[htp]
\centering\scalebox{.50}{\includegraphics{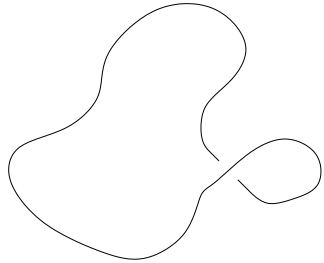}}
\caption{A knot with a crossing whose change does not affect the knot type.}
\label{Fi:example}
\end{figure}

A crossing in a knot which has this property is known as \emph{nugatory}.  Once we know that nugatory crossings exist, we naturally turn our attention toward classifying them.  Certainly a crossing introduced by taking a knot and giving one arc of it a single twist is nugatory.  More generally, a knot formed by taking two knots $K_1$ and $K_2$ and forming the ``twisted" connected sum contains a nugatory crossing (see figure \ref{Fi:connectsum}).

\begin{figure}[htp]
\centering\scalebox{.50}{\includegraphics{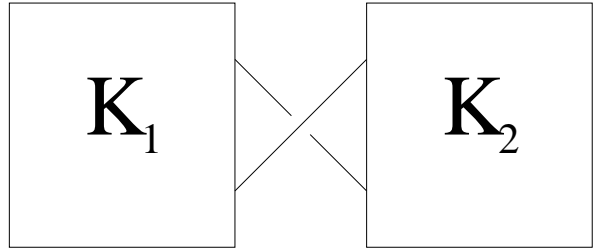}}
\caption{The ``twisted" connected sum of $K_1$ and $K_2$.  The crossing shown is nugatory.}\label{Fi:connectsum}
\end{figure}

\subsection{The nugatory crossing conjecture}

There is a conjecture, due to Xiao-Song Lin, which states that the crossing shown in figure \ref{Fi:connectsum} is the only kind of nugatory crossing (This is problem 1.58 in \cite{KirbyProblemList}).  In order to get a rigorous statement of this conjecture, we must first define exactly what we mean by a crossing change for a knot in $S^3$.  Suppose $K$ is an oriented knot or link in $S^3$, and suppose we have a disk $D$ which intersects $K$ at exactly two points with opposite orientation.  Such a disk is called a \emph{crossing disk} for $K$.  Suppose $D \times I$ is a bicollar of $D$ which intersects $K$ in a pair of arcs.  There is a homeomorphism of $D \times I$ that fixes $D \times \{0\}$ and gives $D \times \{1\}$ a full rotation. Replacing $D \times I$ with its image under such a homeomorphism is called a \emph{crossing change}.  In \cite{MartyCrossing} it is shown that this notion of a crossing change generalizes the notion of a crossing change in a specific diagram of a knot in the sense that a crossing change in a diagram determines a crossing change in $S^3$ and for any crossing change in $S^3$ one can find projections of the knot before and after the change that are identical except in a single crossing.

Using this definition of a crossing change, we can restate the conjecture as follows:

\begin{conjecture}[The nugatory crossing conjecture]
If a crossing change in a knot $K$ is nugatory, i.e. if the knots before and after the crossing change are equivalent, then the crossing disk $D$ bounds a disk in $S^3$ disjoint from $K$.
\end{conjecture}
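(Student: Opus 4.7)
The plan is to attack the 3-braid special case of the conjecture, as announced in the abstract. Fix a presentation of $K$ as the closure of a 3-braid $\beta$, and locate the nugatory crossing $c$ as a particular generator $\sigma_i^{\pm 1}$ in the word for $\beta$. Changing $c$ corresponds to the local substitution $\sigma_i^{\pm 1} \mapsto \sigma_i^{\mp 1}$, producing a new braid $\beta'$ whose closure $\widehat{\beta'}$ is isotopic to $\hat{\beta}$ by hypothesis. The goal is to use this coincidence to force either a drastic simplification of $\beta$ (down to braid index $1$ or $2$) or the existence of a compressing disk for $D$ in $S^3 \setminus K$, which is the geometric conclusion of the conjecture in the low-braid-index cases.

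The key technical device, hinted at in the abstract, should be an invariant $I$ of closed 3-braid \emph{diagrams}, not merely a knot invariant. Classical polynomial invariants (Jones, HOMFLY, Alexander) are too coarse: they ignore exactly the diagrammatic information that would distinguish two braid words differing by a single letter. I would aim for a combinatorial quantity, perhaps counting occurrences of certain syllables or sub-patterns in the cyclically reduced word of $\beta$, that is \textbf{(i)} invariant under conjugation (Markov's first move, needed to make sense on closed braids), and \textbf{(ii)} transforms predictably under the local substitution $\sigma_i^{\pm 1} \mapsto \sigma_i^{\mp 1}$ at the nugatory crossing.

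With such an invariant in place, the equality $\hat{\beta} = \widehat{\beta'}$ combined with Birman--Menasco's classification of 3-braid representatives of a fixed link --- which pins down such representatives up to conjugation, stabilization, and a short explicit list of exchange moves --- should force $I(\beta) = I(\beta')$. Reading this equality back through the definition of $I$ ought to impose strong structural constraints on the placement of the nugatory letter inside $\beta$. One would then verify, case by case, that the only braids $\beta$ meeting these constraints either Markov-destabilize to a 1- or 2-braid, or else admit a visible compressing disk for $D$ (a twist of an unknotted arc).

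The main obstacle, I expect, is the construction of $I$ itself with the right balance of properties. Conjugation invariance is easy to ensure by working with cyclic words, and sensitivity to single-letter changes is easy to engineer in isolation; the difficulty lies in combining both while also respecting the additional exchange moves that Birman--Menasco require on 3-braid representatives. The paper's novelty is likely a specific combinatorial quantity tailored to the three-strand setting --- one that exploits features unique to three strands (for instance, the relation $\sigma_1 \sigma_2 \sigma_1 = \sigma_2 \sigma_1 \sigma_2$, or the structure of the center) --- and that presumably does not extend to higher braid index, which is why the argument stops at $n=3$.
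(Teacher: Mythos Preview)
Your high-level architecture matches the paper's: build an invariant of closed 3-braid \emph{diagrams} that survives whatever moves Birman--Menasco require to relate two 3-braid representatives of the same knot, then show a single crossing change alters it. But the technical realisation is quite different from what you sketch.

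The invariant is not a combinatorial word-count on cyclically reduced braid words. The paper works in the quotient $V_3$ of the Iwahori--Hecke algebra $H_3$ by the relation $ab=ba$ (the closed-braid skein module over $R$ with the relation $T_i^2=BT_i+A$). An algorithm of Hoste--Kidwell/Bigelow writes any closed 3-braid as an $R$-linear combination of the three basis diagrams $\hat v_\lambda$ indexed by partitions of $3$. The Birman--Menasco ingredient is not the classification of conjugacy classes but the Markov Theorem Without Stabilization: for two closed 3-braids whose common knot type has braid index $3$, the connecting sequence uses (up to braid isotopy) only exchange moves and admissible flypes, with $\mathfrak{T}(3)=\varnothing$. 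A short skein calculation then shows the algorithm's output is unchanged by a 3-braid flype (exchanges being a special case), so this output is a genuine invariant in braid index $3$.

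Your proposed endgame --- read off structural constraints on the placement of the nugatory letter from $I(\beta)=I(\beta')$ and check cases --- is replaced by a clean parity argument. In the binary resolution tree there is exactly one leaf reached using only $A^{\pm1}$-edges (never a $B$-edge); its coefficient is $A^k$ with $k$ equal to the number of positive bad crossings minus the number of negative bad crossings. A single crossing change in the diagram flips one crossing between good and bad, so the total number of bad crossings changes by $1$, and hence so does the parity of $k$. The outputs therefore differ, contradicting invariance. This settles braid index $3$ outright --- no nugatory crossing exists --- rather than forcing a destabilization. Braid indices $1$ and $2$ are not handled internally at all; the paper simply cites Scharlemann--Thompson (unknot) and Torisu (2-bridge knots). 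Your hope of exhibiting the compressing disk directly from the braid word is not pursued.
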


The union of these two disks produces a twice-punctured sphere which decomposes $K$ into a connected sum, although one of the summands may be trivial.  This implies that there is a projection of the knot which looks like figure~\ref{Fi:connectsum}.

While this conjecture is still unproven, some partial results are known.  Probably the most important of these states that the conjecture is true for the unknot.  This follows from an argument of Scharlemann and Thompson, which uses the machinery of sutured manifold theory (see \cite{MartyCrossing} for a proof).  In \cite{TorisuNugatory}, Torisu proved that 2-bridge knots satisfy the conjecture.

The conjecture is also known to hold for fibred knots, due to Kalfagianni \cite{Fibred}.

\subsection{The Main Theorem}

The goal of this paper is to prove another special case of the conjecture.  By examining the behavior of closed braids in Hecke algebras, we can prove the following:

\begin{main}
Lin's conjecture holds for closed 3-braid diagrams.  Specifically, if a crossing in a closed 3-braid diagram is nugatory, then the braid index of the knot type it represents is 1 or 2. Thus, by the results of Scharlemann and Torisu, if a closed 3-braid diagram contains a nugatory crossing then when the closed braid is considered in $S^3$ the crossing disk corresponding to that crossing bounds a disk in $S^3$ disjoint from the closed braid.
\end{main}

The plan of the paper is as follows:  First, we discuss the behavior of closed braids in a specific algebraic setting, the Iwahori-Hecke algebra.  We describe an algorithm that inputs a closed braid and outputs a linear combination of the basis elements of the algebra.  This algorithm gives us a way to better understand the form that a closed braid takes in the algebra.  The full result requires the use of a powerful theorem of Birman and Menasco, known as the Markov Theorem without Stabilization, to generalize results about elements of the algebra back to closed braids.

\section{Definitions}\label{S:definitions}

Since we will be considering a conjecture about knots from the point of view of braid theory, it will be helpful to recall a few definitions and theorems which help connect the two ideas.

Given a braid $b\in B_n$, we denote the closure of the braid by $\hat{b}$.  The closure of a braid is typically drawn as a set of concentric circles with certain pairs of neighboring arcs removed and replaced with crossings.  We'll call the common center of the concentric circles the \emph{axis} of the closed braid diagram.

It is a theorem of Alexander that any braid diagram can be transformed into a diagram of a knot or link by forming its closure, and conversely any knot or link diagram can be cut apart to produce a braid.

This association of braids and links is non-unique.  In fact, there are many inequivalent braids which, when closed, produce equivalent knots. The problem of describing when this can occur was solved by Markov, and a pair of braids whose closures are equivalent knots are typically called \emph{Markov equivalent}.

\begin{theorem}{(Markov's Theorem)}\label{T:Markov}
Suppose that $b$ and $b'$ are braids which have the property that their closures $\widehat{b}$ and $\widehat{b'}$ are equivalent knots.  Then there exists a finite sequence $$b = b_1 \rightarrow \ldots \rightarrow b_k = b'$$ where each passage $b_i \rightarrow b_{i+1}$ is given by one of the following moves, known as \emph{Markov moves}.  In each move below, assume that $b$ is an $n$-braid and $\iota:  B_n \rightarrow B_{n+1}$ is the inclusion map.
\begin{itemize}
\item Conjugation:  $b \rightarrow aba^{-1}$ for some $a \in B_n$.
\item Stabilization:  $b \rightarrow \iota(b)\sigma_{n}$ or $b \rightarrow \iota(b)\sigma_{n}^{-1}$
\item Destabilization:  $\iota(b)\sigma_{n} \rightarrow b$ or $\iota(b)\sigma_{n}^{-1} \rightarrow b$
\end{itemize}
\end{theorem}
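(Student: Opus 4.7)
The plan is to reduce Markov's theorem to an analysis of Reidemeister moves on closed braid diagrams, using Alexander's theorem as a tool to stay in the braided setting throughout. Since $\widehat{b}$ and $\widehat{b'}$ are equivalent links, their diagrams are related by a finite sequence of planar isotopies and Reidemeister moves (I, II, III). I would process this sequence step by step: after each Reidemeister move the intermediate diagram is typically no longer a closed braid, so I apply a braiding algorithm (such as Alexander's or Vogel's) to rebraid it. This yields a chain of braids $b = b_0, b_1, \ldots, b_k = b'$, and the task reduces to showing that consecutive $b_i, b_{i+1}$ differ by conjugation and (de)stabilization.

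The analysis then splits by the type of Reidemeister move and by the local behavior of the braiding algorithm. A Reidemeister III move occurring among the braid strands is just the braid relation $\sigma_j\sigma_{j+1}\sigma_j = \sigma_{j+1}\sigma_j\sigma_{j+1}$, and a Reidemeister II move among the strands is the relation $\sigma_j\sigma_j^{-1}=1$; both give equalities in $B_n$ up to cyclic reordering of the closed-braid word, which amounts to conjugation. A Reidemeister I move on the outermost strand is exactly a stabilization or destabilization. Each local step of the braiding algorithm itself --- a Seifert circle reduction or an arc-pulling move across the axis --- can likewise be realized by conjugation in $B_n$, possibly after first stabilizing to a larger braid group and then destabilizing.

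The main obstacle is handling Reidemeister moves that occur at a location not compatible with the existing braided structure: moves involving an arc that crosses the braid axis, or moves sitting in a region the rebraiding step will subsequently reorganize. In those cases the Reidemeister move and the rebraiding must be analyzed together, and the induced change of braid may temporarily increase the braid index via one or more stabilizations before returning to a lower-index presentation through destabilizations. The technical heart of the proof is a normal-form lemma asserting that any such composite transformation can be rewritten as an alternating sequence of conjugations and (de)stabilizations. This is established by induction on the number of strands and on the length of the braid word, and is the portion of the argument worked out in detail by Birman and subsequently streamlined by Morton and by Traczyk; everything else reduces to routine case-checking against the short list of local models above.
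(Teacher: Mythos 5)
The paper does not prove this statement: Markov's Theorem is quoted as classical background (the text simply says the problem ``was solved by Markov'') and is used as a black box, so there is no in-paper argument to compare yours against. Judged on its own terms, your outline correctly identifies the standard strategy --- relate the two closures by Reidemeister moves, rebraid after each move, and show that each composite step factors through conjugation and (de)stabilization --- which is indeed the shape of the proofs of Birman, Morton, Traczyk, and Vogel.

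However, as written your argument has a genuine gap: the entire difficulty of Markov's Theorem is concentrated in the step you label the ``normal-form lemma,'' namely that a Reidemeister move incompatible with the braided structure, followed by rebraiding, can always be rewritten as an alternating sequence of conjugations and (de)stabilizations. You assert this and attribute it to the literature rather than proving it; the easy cases you do handle (braid relations, $\sigma_j\sigma_j^{-1}=1$, cyclic permutation as conjugation, a kink around the axis as stabilization) are exactly the cases where nothing needs to be done. In particular, controlling how Vogel's or Alexander's rebraiding procedure interacts with an arbitrary isotopy --- and showing that the temporary increases in braid index can be organized into honest stabilization/destabilization pairs --- requires a careful induction with a well-chosen complexity function, and no candidate for that induction is set up here. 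If the intent is to cite the theorem, cite it (as the paper does); if the intent is to prove it, the normal-form lemma must be stated precisely and proved, since everything else is routine.
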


\section{Closed braids in Hecke algebras}\label{S:algorithm}

Our first goals are to define a useful quotient of the Iwahori-Hecke
algebra, define a basis for this quotient, and describe an algorithm
that allows us to describe any closed braid in terms of this basis.

\subsection{The Iwahori-Hecke algebra}

Let $R$ be a domain.  Fix units $A$ and $B$ in $R$ and fix an integer $n$.
We define the Iwahori-Hecke algebra $H_n$ to be the
associative $R$-algebra generated by $T_1, \ldots , T_{n-1}$ and
subject to the relations
\begin{itemize}
\item $T_i T_j = T_j T_i$ if $|i-j| > 1$
\item $T_i T_{i+1} T_i = T_{i+1} T_i T_{i+1}$
\item $T_i^2 - B T_i - A = 0$
\end{itemize}

There is a natural homomorphism from the braid group $B_n$ to $H_n$ given by $\sigma_i \mapsto T_i$,
where $\{\sigma_1, \ldots , \sigma_{n-1}\}$ are the standard generators
of the braid group $B_n$.  Thus $H_n$ is isomorphic to the group algebra $RB_n$ modulo the skein relation given in Figure \ref{Fi:skein}
(which is implied by the third relation above).

\begin{figure}[htp]
\centering\scalebox{.50}{\includegraphics{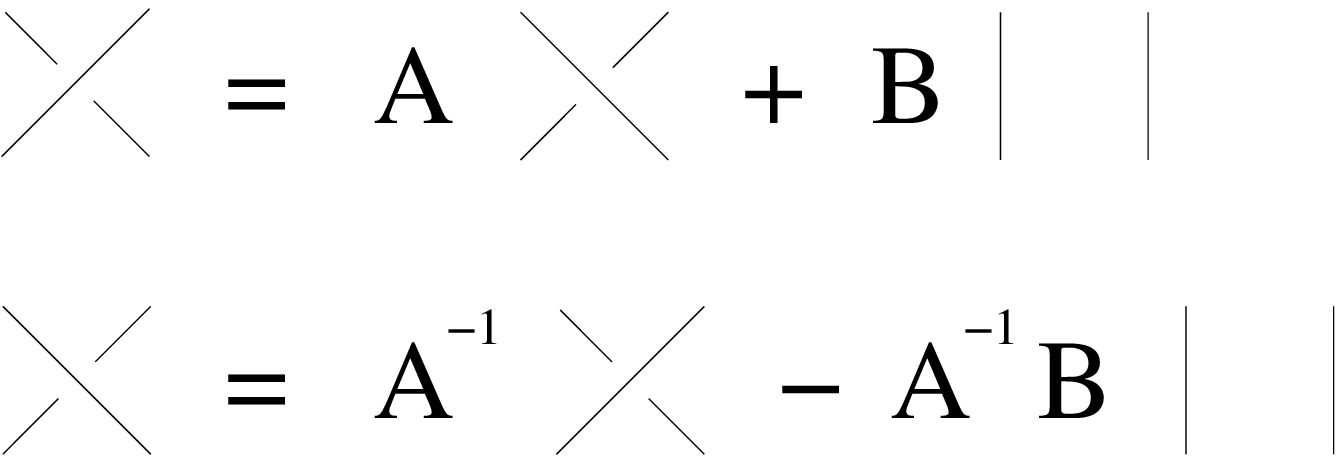}}
\caption{Two equivalent forms of the skein relation in $H_n$}\label{Fi:skein}
\end{figure}

The skein relation should be interpreted as follows:  suppose $b_+$, $b_-$, and $b_0$ are braid diagrams which are identical except at a single crossing, where $b_+$ has a positive crossing, $b_-$ has a negative crossing, and $b_0$ has no crossing.  Then the three diagrams are related in $H_n$ by the formula $b_+ = A b_- + B b_0$.

The construction of $H_n$ given here follows the construction in \cite{algorithm}.  The original construction was defined in \cite{Iwahori}.

$H_n$ is a useful setting for the study of braids, but since we are
primarily interested in knots and links, one more step must be taken.
Let $V_n$ be the quotient of $H_n$ by the extra relation $ab = ba$,
where $a$ and $b$ are elements of $H_n$.  Geometrically, this relation
has the effect of replacing any braid diagram with its closure.

$V_n$ can now be described as an $R$-module of linear combinations of
closed braid diagrams subject to the skein relations in figure~\ref{Fi:skein}.

Our first theorem gives some insight into the structure of $V_n$.
Recall that a \emph{partition} of a positive integer $n$ is a
sequence of positive integers $(n_1, \ldots , n_m)$ such that
$n_1 \geq \ldots \geq n_m \geq 0$ and $n_1 + \ldots + n_m = n$.

\begin{theorem}\label{T:dimension}
$V$ is a finite dimensional free $R$-module with dimension $P(n)$, where
$P(n)$ is the number of partitions of $n$.
\end{theorem}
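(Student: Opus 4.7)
The plan is to produce an explicit basis of $V_n$ indexed by partitions of $n$. For each partition $\lambda = (n_1, \ldots, n_m) \vdash n$, let $e_\lambda \in V_n$ be the image of the positive braid that acts on the first $n_1$ strands as the cyclic braid $\sigma_1 \sigma_2 \cdots \sigma_{n_1 - 1}$, on the next $n_2$ strands as the analogous cyclic braid on those strands, and so on across the blocks of $\lambda$. The underlying permutation is a disjoint product of cycles of lengths $n_1, \ldots, n_m$, and the closure of each cyclic block is an unknot, so $\widehat{e_\lambda}$ is a trivial $m$-component link. The theorem will follow once I show $\{e_\lambda : \lambda \vdash n\}$ is a basis of $V_n$ over $R$.

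For the upper bound, I would give an algorithm that rewrites any braid word in $V_n$ as an $R$-linear combination of the $e_\lambda$. The two ingredients are (i) the skein/quadratic relation $T_i^2 = B T_i + A$, which replaces any squared generator by a linear combination of strictly shorter words, and (ii) the defining identification $ab = ba$ in $V_n$, which permits cyclic permutation and, more generally, conjugation of any braid word. Iterating (i) caps the exponent of each generator, while (ii) lets us reorganize the remaining word so that its underlying permutation is in a canonical representative of its $S_n$-conjugacy class. Since conjugacy classes of $S_n$ are in bijection with partitions of $n$, the output is a linear combination of the $e_\lambda$, giving $\dim_R V_n \le P(n)$.

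For the lower bound, I would invoke the representation theory of $H_n$: over the field of fractions of $R$, $H_n$ is generically semisimple with exactly $P(n)$ irreducible representations $\{\rho_\lambda\}_{\lambda \vdash n}$, a classical result of Hoefsmit and Dipper--James. Each character $\chi_\lambda = \operatorname{tr} \circ \rho_\lambda$ is a trace on $H_n$, hence vanishes on $[H_n,H_n]$ and descends to a linear functional on $V_n$. Semisimplicity guarantees these $P(n)$ characters are linearly independent as functionals on $V_n$, so the pairing matrix $(\chi_\mu(e_\lambda))_{\lambda,\mu}$ is invertible over the fraction field. Since $R$ is a domain and embeds in its fraction field, this forces $R$-linear independence of $\{e_\lambda\}$; together with spanning, $V_n$ is free of rank $P(n)$.

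The main obstacle is the lower bound. The spanning argument is, up to bookkeeping, a direct application of the skein relation and conjugation, and by construction produces a set of size exactly $P(n)$. Linear independence, however, cannot be detected inside the algebra itself: because each $\widehat{e_\lambda}$ is a trivial link, coarse invariants like the HOMFLY polynomial of the closure fail to separate partitions with the same number of parts. One must exploit the finer trace information carried by $H_n$ before closing, which is why some form of Hecke-algebra semisimplicity, or equivalently the construction of $P(n)$ independent traces on $H_n$, is essential.
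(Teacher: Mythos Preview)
The paper does not itself prove this theorem; it cites Bigelow for the full argument and only sketches the spanning direction via the Hoste--Kidwell algorithm. That algorithm is geometric: one fixes a basepoint on a closed braid diagram, walks along the knot labeling crossings \emph{good} or \emph{bad}, and uses the skein relation to eliminate bad crossings one at a time, terminating in a linear combination of the $\hat v_\lambda$. Your spanning sketch is instead purely algebraic---reduce exponents with the quadratic relation, then conjugate the underlying permutation to a canonical cycle type. Both routes give the upper bound $P(n)$, but the paper's diagrammatic version is not incidental: the good/bad labeling and the shape of the resulting binary resolution tree are exactly what drive Theorems~\ref{T:invariant} and~\ref{T:parity} later, so from the paper's standpoint the algorithm is the content, not merely a proof device.

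Two places where your outline is thinner than it looks. First, ``reorganize so that the underlying permutation is in a canonical representative of its $S_n$-conjugacy class'' hides a real lemma. The relation $ab=ba$ in $V_n$ lets you cyclically shift a word for $T_w$, but a cyclic shift of a reduced word need not be reduced, and $T_v T_w T_v^{-1}$ is \emph{not} in general equal to $T_{vwv^{-1}}$ in $H_n$; getting from $T_w$ to $T_{w'}$ for conjugate $w,w'$ in the cocenter requires something like the Geck--Pfeiffer theory of minimal-length elements in conjugacy classes, or else the geometric algorithm the paper uses. Second, your lower bound assumes $H_n\otimes_R\mathrm{Frac}(R)$ is semisimple, but the paper allows an arbitrary domain $R$ with arbitrary units $A,B$; for bad specializations (say $A=1$, $B=0$ with $R$ of small positive characteristic, so $H_n=R[S_n]$) this fails. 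The standard repair is to prove freeness first over the universal ring $\mathbb{Z}[A^{\pm1},B^{\pm1}]$, where the fraction field really is generic and your character argument goes through, and then base-change; but that step should be made explicit.
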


We can define an explicit basis for the $R$-module $V_n$.  For a
positive integer $k$, define $b_{(k)}$ to be the $k$-braid
$\sigma_{k-1} \ldots \sigma_2 \sigma_1$.  Let
$\lambda = (\lambda_1, \ldots , \lambda_m)$ be a partition of $n$.
Define $v_\lambda$ to be the $n$-braid obtained
by placing the braids $b_{(\lambda_1)}, \ldots , b_{(\lambda_m)}$ side by
side from left to right and regarding the whole as a single braid (see figure \ref{Fi:basis}).
The set $\{\hat{v_\lambda}\}$, where $\hat{v_\lambda}$ is the closure of $v_\lambda$ and $\lambda$ ranges over the partitions of $n$, is a basis for $V_n$.

\begin{figure}[htp]
\centering\scalebox{1.0}{\includegraphics{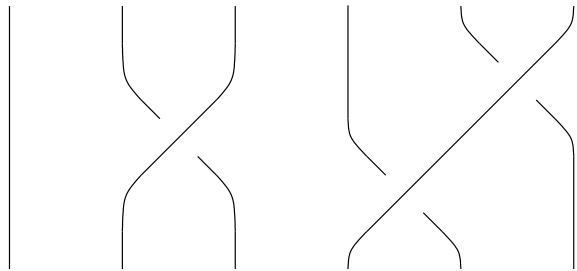}}
\caption{The 6-braid $v_{(1,2,3)}$}\label{Fi:basis}
\end{figure}

The proof that $\{\hat{v_\lambda}\}$ is a basis will require an algorithm
described by Bigelow in \cite{algorithm} and originally due to Hoste and Kidwell \cite{dichromatic}.  The purpose of this algorithm is to provide a systematic method for describing any closed braid in terms of this basis.  The proof of Theorem \ref{T:dimension} is provided in \cite{algorithm}.

\subsection{The algorithm}

The algorithm itself is straightforward.  Given a closed, oriented $n$-braid
diagram, choose a basepoint anywhere on the braid that is not a
crossing point.  Isotope the braid by
dragging a neighborhood of the basepoint toward the axis of the
braid diagram, pulling it over all the other strands, so that the basepoint is
at least as close to the axis as any other point.
Consider every crossing in the diagram to be unlabeled.
Starting at the basepoint, travel along the braid following its orientation.  When an unlabeled crossing is encountered, we label it
\emph{good} if we are traveling along the overcrossing strand and
\emph{bad} if we are traveling along the undercrossing strand.  See figure \ref{Fi:goodbad}.

\begin{figure}[htp]
\centering\scalebox{0.75}{\includegraphics{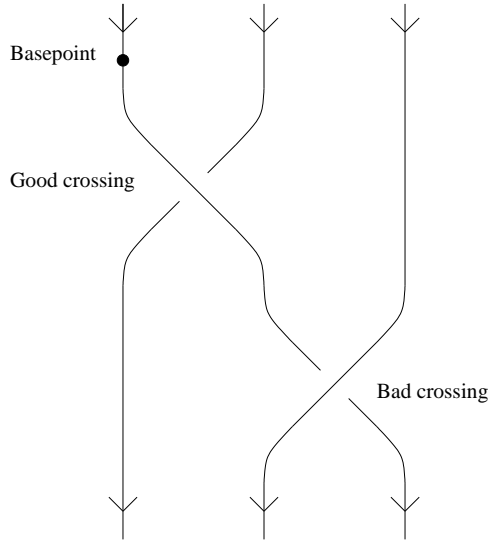}}
\caption{Labeling crossings good and bad}\label{Fi:goodbad}
\end{figure}

Whenever we encounter a bad crossing, we immediately use the skein
relation to change our diagram into a linear combination of braid
diagrams, one of which has replaced the bad crossing with a good
crossing, and one of which has replaced it with no crossing.  Using
the same basepoint, we then run the algorithm on each of the resulting
diagrams.

When we eventually travel back to the basepoint, we have obtained a
diagram in which the component of the link containing the basepoint
contains no bad crossings.  In particular, this means that it
contains only overcrossings with other components of the link.  As
such, we may isotope the entire component to the outer edge of the
braid, moving it over all the other components, so that it is the
farthest component of the link from the axis of the closed braid.  We
then apply the algorithm to any remaining components, choosing a
new basepoint as necessary.

Once a crossing is labeled (and possibly changed) for the first time in this process, it will never again need to be changed.  This is because a good crossing will always be labeled good in further iterations of the algorithm.  The number of unlabeled crossings decreases with each iteration, and so the process will terminate in a linear combination of closed braid
diagrams that contain no bad crossings.  It is not difficult to see
that the resulting diagrams must be braid isotopic to $\hat{v_\lambda}$ for
some partition $\lambda$ of $n$.

An example of this algorithm is given below in figure \ref{Fi:algorithm}.

\begin{figure}[htp]
\centering\scalebox{0.30}{\includegraphics{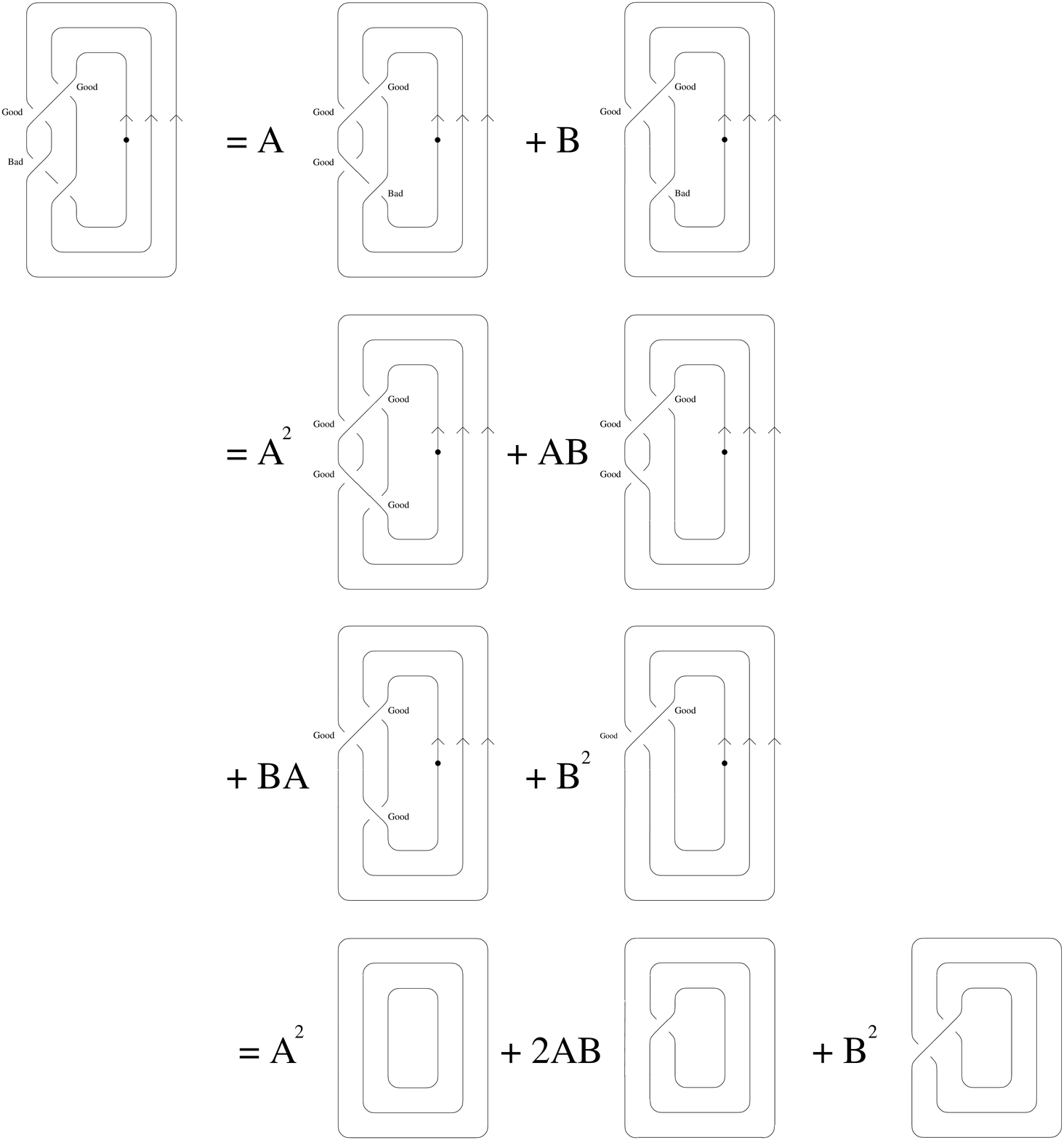}}
\caption{The algorithm in action}\label{Fi:algorithm}
\end{figure}

This algorithm will be useful to us in the sections to follow, so it is to our benefit to make some remarks about it.

\begin{remark}
The results of the algorithm can be described in terms of a binary tree with labeled edges.  Since the skein relation transforms one closed braid diagram into a linear combination of two other diagrams, each application of the skein relation can be described by a branch in the tree with edges labeled by the appropriate coefficients.  See figure \ref{Fi:trefoil}.  The final coefficients in the linear combination at the end of the algorithm can be recovered from the binary tree by examining all possible paths from the root vertex to the leaf vertices.
\end{remark}

\begin{figure}[htp]
\centering\scalebox{.50}{\includegraphics{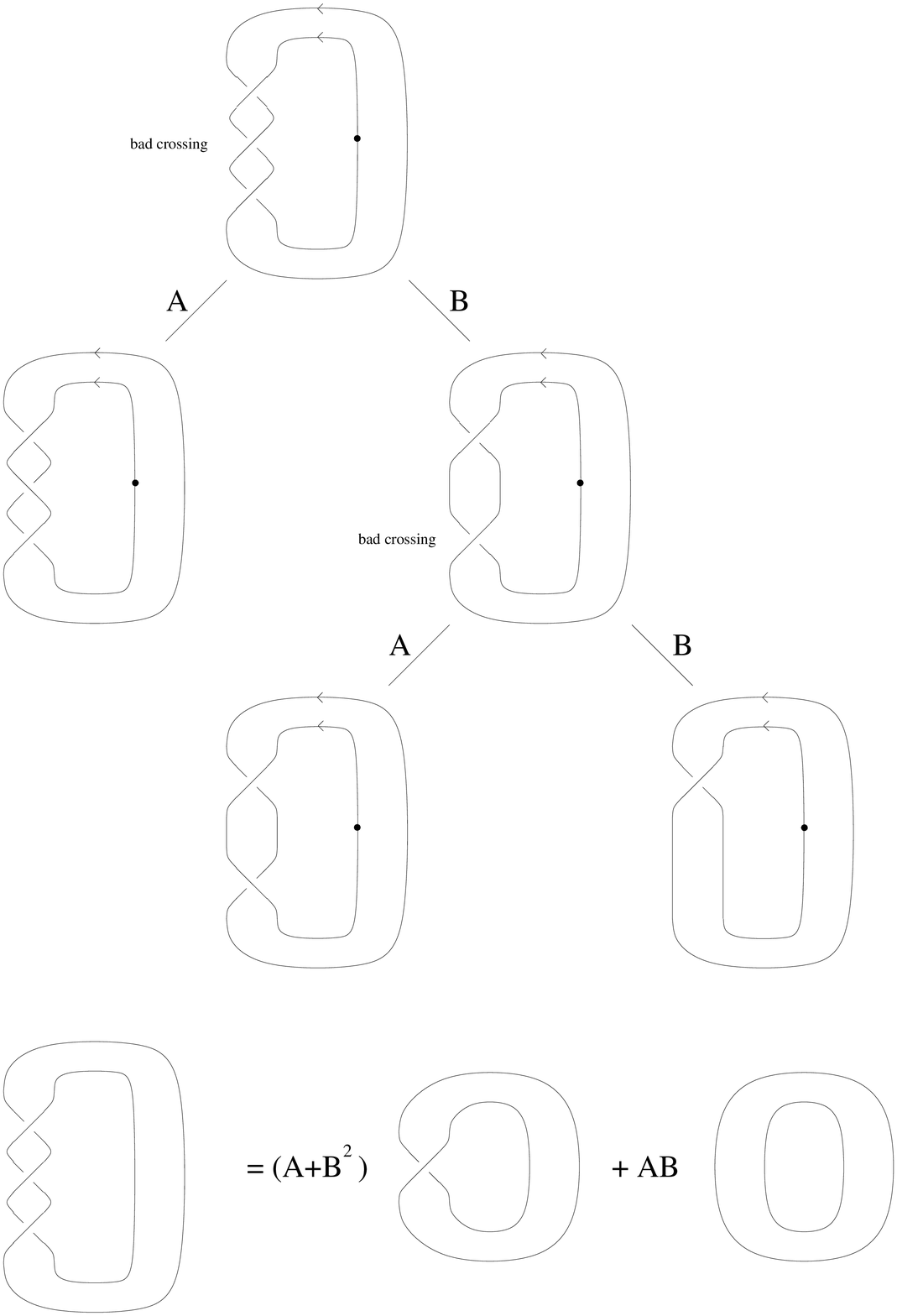}}
\caption{Applying the algorithm to the trefoil.}\label{Fi:trefoil}
\end{figure}

\begin{remark}\label{R:label}
It is possible to assign a good/bad label to every crossing in the diagram before beginning the algorithm, rather than assigning labels at every stage as described above.  Simply follow a strand, starting at the chosen basepoint, and assign labels at each crossing as above but take no action if a ``bad" label is assigned.  Continue this process until you come around to the basepoint again.  In the case of a diagram which represents a knot, this is all that is needed.  In the case of a link diagram, however, some crossings will remain unlabeled when the basepoint is reached.  In this case, we must choose a new basepoint on a different component of the link and repeat the previous step, labeling every unlabeled crossing we come across, until our new basepoint is reached again.  Continue choosing new basepoints and labeling crossings until all components of the link and all crossings of the diagram are accounted for.

In practical terms, this alternate method of assigning labels is highly inefficient.  This is due to the fact that every time the algorithm resolves a crossing, it creates a diagram in which that crossing is removed.  This may have the side effect of changing the labels on some of the other crossings in the diagram, in which case the entire labeling process would have to be repeated.  However, it is very useful to know that, in theory, each crossing in a particular closed braid diagram can be uniquely assigned a good/bad label (up to valid choices of basepoint).  We will make use of this fact in Theorem~\ref{T:parity}.
\end{remark}

\begin{remark}
It is important to notice that the linear combination that results from the algorithm is in general not a knot or link invariant.  A simple example is given in Figure \ref{Fi:notinvariant}.

\begin{figure}[htp]
\centering\scalebox{.50}{\includegraphics{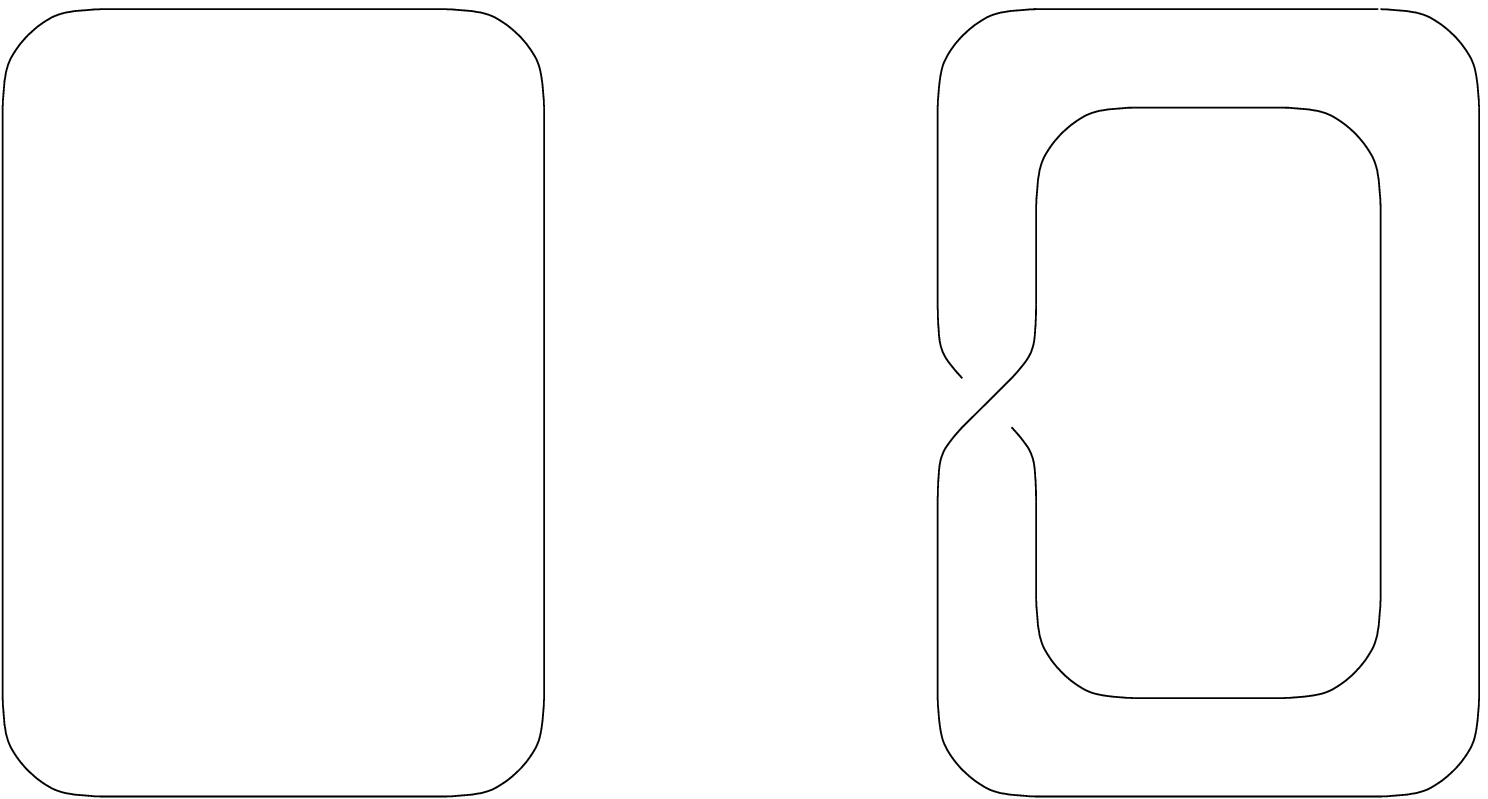}}
\caption{Two closed braids which share the same knot type but do not have the same output under the algorithm.}
\label{Fi:notinvariant}
\end{figure}

The obstruction involves Markov's Theorem (Theorem \ref{T:Markov}).  The three types of Markov moves are summarized below in the language of $V_n$.  Here $b$ is an $n$-braid and $\iota : H_n \to H_{n+1}$ is the inclusion map.

\begin{itemize}
\item braid isotopy
\item Stabilization:  $\hat{b} \rightarrow \widehat{\iota(b)\sigma_{n+1}}$ or $\hat{b} \rightarrow \widehat{\iota(b)\sigma_{n+1}^{-1}}$
\item Destabilization:  $\widehat{\iota(b)\sigma_{n+1}} \rightarrow \hat{b}$ or $\widehat{\iota(b)\sigma_{n+1}^{-1}} \rightarrow \hat{b}$
\end{itemize}

The algorithm is invariant under braid isotopy, but not stabilization or destabilization.  However, as shown in \cite{algorithm}, we can use trace functions to obtain the HOMFLY and Jones polynomials from the algorithm's output.
\end{remark}

\section{Using the algorithm to study crossing changes}\label{S:crossing change}

Since the algorithm is not an invariant of knots, it is not
immediately apparent how it might be used to study crossing changes
of knots.  This obstacle is overcome for a certain class of knots by
the Markov Theorem Without Stabilization (or MTWS) of Birman and
Menasco, which was proved in \cite{MTWS}.

\subsection{The MTWS}

The MTWS is the result of an attempt to better understand the inner workings of Markov's Theorem.  While Markov's theorem is is quite powerful and useful, especially when one is concerned with developing knot invariants using the machinery of braid theory, it does have a major drawback:  given two braids which are Markov equivalent, Markov's theorem does not supply an algorithm for changing one braid into the other using Markov moves.  It does not even give any indication of how many moves may be required.  For instance, it is possible that to change one 3-braid into another Markov equivalent 3-braid might require a sequence of a million consecutive stabilizations, a set of braid isotopies and conjugations, and a million consecutive destabilizations.

The MTWS does not directly provide an algorithm to transform one braid into another, but it does show that if such a transformation can be done, it can be done with a minimum of complexity.  It does this by defining a new set of moves which are similar in flavor to Markov's moves, but are required to to always be nonincreasing on braid index.  Thus the sequences of intermediate braids between one braid and another are monotone decreasing on braid index.  Together with a complexity function provided by the theorem, this creates an environment where we can study Markov equivalence between closed braids without worrying about the problem mentioned above (huge jumps in complexity).

\begin{theorem}{(The Markov Theorem Without Stabilization (MTWS))}\label{T:MTWS}
Let $X_+$ and $X_-$ be closed braid diagrams which represent the
same knot type, and suppose that the index of $X_-$ is equal
to the braid index of the associated knot type.  Then there exist the following:
\begin{enumerate}
    \item A complexity function with values in $\mathbb{Z}_+ \times \mathbb{Z}_+ \times \mathbb{Z}_+$ associated to the pair $(X_+,X_-)$.
    \item For each braid index $m$, a set $\mathfrak{T}(m)$ of templates, each of which determines a
        move (in the sense of Reidemeister moves or Markov moves) which is
        non-increasing on braid diagram index.
    \item A sequence $$X_- = X_-^1 \rightarrow \ldots \rightarrow X_-^p = X'_-$$ in which each passage $X_-^i \rightarrow X_-^{i+1}$ is strictly complexity-reducing and is achieved, up to braid isotopy, by an exchange move.
    \item A sequence $$X_+ = X_+^1 \rightarrow \ldots \rightarrow X_+^q = X'_+$$ in which each passage $X_+^i \rightarrow X_+^{i+1}$ is strictly complexity-reducing and is achieved, up to braid isotopy, by an exchange move or a destabilization.
    \item A sequence $$X'_+ = X^q \rightarrow \ldots \rightarrow X^r = X'_-$$ in which each passage $X^i \rightarrow X^{i+1}$ is strictly complexity-reducing and is achieved, up to braid isotopy, by an exchange move, a destabilization, and admissible flype, or a move defined by one of the templates in $\mathfrak{T}(l)$, where $l$ is the braid index of the diagram $X^i$.
\end{enumerate}
\end{theorem}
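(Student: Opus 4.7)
The plan is to use the machinery of braid foliations on embedded surfaces, developed by Birman and Menasco across their long series of papers on closed braids. Given that $\widehat{X_+}$ and $\widehat{X_-}$ represent the same knot and that $X_-$ realizes the braid index, the first step is to build a piecewise smooth surface $F$ in $S^3$ whose boundary is $\widehat{X_+} \cup \widehat{X_-}$ (an annulus, after matching orientations), chosen so that both braids are transverse to the disk fiber structure coming from a common braid axis $Z$. The singular foliation on $F$ induced by this open book decomposition of $S^3 \setminus Z$ is the central object of study: its leaves are arcs and circles meeting $Z$, its singularities are either elliptic (where $F$ is tangent to the axis) or hyperbolic (where a disk fiber is tangent to $F$), and each singularity carries a sign.

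Next I would define the complexity function. A natural three-integer choice records, in lexicographic order, the braid index of the current diagram, the number of hyperbolic singularities in the induced foliation, and a secondary combinatorial invariant such as the total exponent-sum word length. I would then catalogue the elementary modifications of the foliation: cancellation of pairs of singularities, exchange of elliptic points across the axis, and various more complex rearrangements. By Morse-theoretic bookkeeping on $F$, each such modification corresponds to either a braid isotopy, a destabilization, an exchange move, an admissible flype, or a genuinely new configuration that I would package as one of the templates in $\mathfrak{T}(m)$.

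With these tools in place, the three sequences of parts (3)--(5) are constructed as follows. First, since $X_-$ already has minimal braid index, no destabilizations are available to it, so I would reduce $X_-$ to a normal form $X_-'$ using only exchange moves, each of which strictly decreases complexity. Second, I would reduce $X_+$ to a normal form $X_+'$ allowing both exchanges and destabilizations, again strictly reducing complexity at each step. Finally, having reached two reduced diagrams on either end, I would invoke the full power of the template catalogue, together with admissible flypes, to connect $X_+'$ to $X_-'$ monotonically. The existence of these sequences reduces to showing that every simplification of the braid foliation fits into one of the allowed move classes, which in turn follows from the local structure theory for foliations with prescribed singular behavior.

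The hardest step will be the classification of the template set $\mathfrak{T}(m)$. One must enumerate every irreducible configuration of singularities on $F$ whose elimination requires a move not already covered by exchanges, destabilizations, or flypes, and check that each such configuration in fact induces a well-defined braid move that is non-increasing on braid index. This is essentially a finite but intricate case analysis of foliation types, and it is the technical heart of the argument; verifying strict monotonicity of the complexity function uniformly across all templates, without accidentally introducing an increase in braid index, requires particular care.
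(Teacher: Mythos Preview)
The paper does not contain a proof of this theorem. It is stated as Theorem~\ref{T:MTWS} and attributed to Birman and Menasco \cite{MTWS}; the paper only supplies the surrounding definitions (block-strand diagrams, templates, exchange moves, admissible flypes) and then invokes the result as a black box in the proof of Theorem~\ref{T:invariant}. There is therefore nothing in the paper to compare your proposal against.

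That said, your sketch is broadly faithful to the actual Birman--Menasco strategy: the argument in \cite{MTWS} does proceed via a braid-foliated spanning surface between the two closed braids, a lexicographic complexity built from combinatorial data of the foliation, and a case analysis of singular tile configurations that yields the moves and templates. Two cautions if you intend to flesh this out. First, the complexity function in \cite{MTWS} is not the triple you propose; it is built from counts of singularities and tiles on the foliated surface, not from braid index and word length, and getting strict monotonicity across all template moves is delicate precisely because the ``obvious'' complexities fail. Second, the finiteness and well-definedness of $\mathfrak{T}(m)$ is far from a routine case analysis: it occupies the bulk of \cite{MTWS} and requires a careful notion of ``thin'' annuli and an amalgamation procedure for tiles. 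Your last paragraph correctly flags this as the technical heart, but calling it ``essentially a finite but intricate case analysis'' understates the difficulty by an order of magnitude.
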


In order to understand the terminology of the MTWS, a few definitions are
in order.

A \emph{block-strand diagram} is a diagram as in figure \ref{Fi:exchange}.
When braids of the appropriate sizes are substituted into the blank
boxes (called a braiding assignment), it becomes a closed braid diagram.  Frequently, multiple parallel strands in a block-strand diagram are drawn as a single weighted strand, the number of intended strands being indicated next to the strand itself.  Figures \ref{Fi:exchange} and \ref{Fi:flype} include weighted strands to indicate that they can represent closed braids of any index.  A \emph{template}
is a pair of block-strand diagrams $D$ and $E$, both with blocks $B_1,\ldots,B_k$, together with an isotopy taking $D$ to $E$ in such a way that any fixed braiding assignment to the blocks yields closed braids $D'$ and $E'$ which represent the same knot type.  Examples
of templates are given below, which define the exchange and
admissible flype moves listed in the statement of the MTWS.  The
destabilization move listed above is the same as the move given in
Markov's Theorem.  All moves used in the MTWS preserve the knot type
of the diagrams involved.

\begin{figure}[htp]
\centering\scalebox{1.0}{\includegraphics{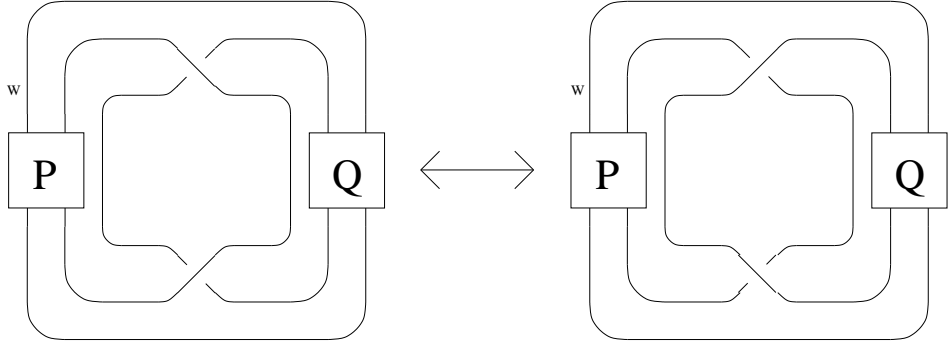}}
\caption{The exchange template.}\label{Fi:exchange}
\end{figure}

\begin{figure}[htp]
\centering\scalebox{1.0}{\includegraphics{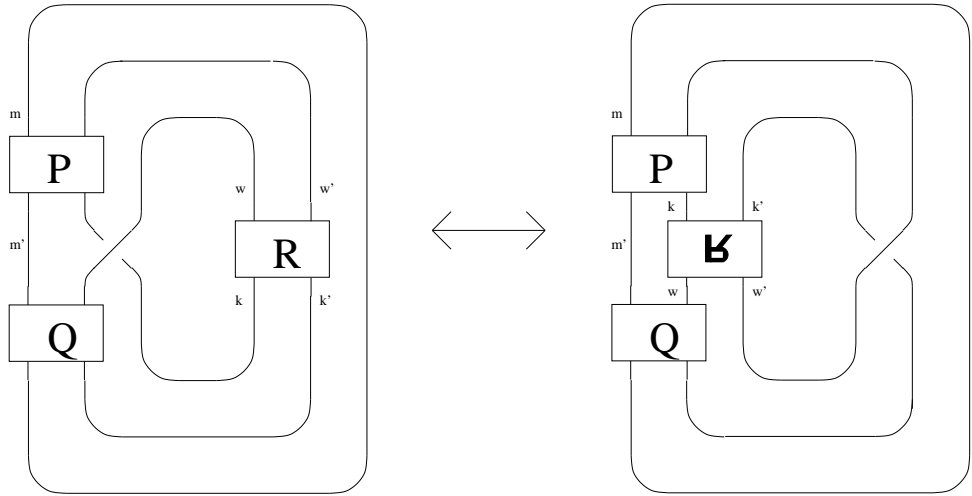}}
\caption{The flype template.}\label{Fi:flype}
\end{figure}

\begin{remark}
A flype move is called \emph{admissible} if $w'-k = k'-w \geq 0$.  This ensures that the move is non-increasing on braid index.  In the case of 3-braids, it is easy to see that all flypes are admissible.
\end{remark}

\begin{remark}
The full strength of the MTWS is not necessary for our purposes.  In particular, we have no use for the complexity function guaranteed by the theorem.  We are also working in the special case of closed 3-braids which represent knots of braid index 3, and this allows us to simplify the sequences in the MTWS to ones which only involve exchange moves since we need never perform a destabilization.  Since we are disregarding the complexity of the diagrams involved, there is no harm in reversing the sequence given in (3) and concatenating all three sequences into one.  This allows us to state the following abridged version of the MTWS which is more readily useful for us.

\begin{theorem}{(MTWS Lite)}\label{T:MTWS-Lite}
Let $X_+$ and $X_-$ be closed braid diagrams which represent the
same knot type, and suppose that the index of both diagrams is equal
to the braid index of the associated knot type.  Then for each braid index $m$ there
is a set $\mathfrak{T}(m)$ of templates, each of which determines a
move (in the sense of Reidemeister moves or Markov moves) which is
non-increasing on braid diagram index, and a sequence
$$X_+ = X^1 \rightarrow \ldots \rightarrow X^k = X_-$$
where each passage $X^i \rightarrow X^{i+1}$ is achieved, up to
braid isotopy, by an exchange move, an admissible
flype, or a move defined by one of the templates in
$\mathfrak{T}(m)$.
\end{theorem}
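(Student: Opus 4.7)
The plan is to deduce Theorem \ref{T:MTWS-Lite} from the full MTWS by exploiting the hypothesis that both $X_+$ and $X_-$ already realize the braid index $n$ of their common knot type. First I would apply Theorem \ref{T:MTWS} to the ordered pair $(X_+, X_-)$ to obtain the three sequences described in parts (3), (4), and (5) of that theorem. Every closed braid diagram appearing in these sequences represents the same knot, so its braid index is at least $n$. Combined with the fact that every move furnished by the MTWS is non-increasing on braid index and that $X_+$ already sits at index $n$, this forces every intermediate diagram to have index exactly $n$. In particular, no destabilization can occur anywhere in the three sequences, since a destabilization would drop the braid index to $n-1$ with no way to recover it later.

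With destabilizations eliminated, sequence (3) consists solely of exchange moves, sequence (4) consists solely of exchange moves, and sequence (5) consists of exchange moves, admissible flypes, and moves defined by templates in $\mathfrak{T}(n)$. The next step is to reverse sequence (3) and splice everything together. Since the exchange template in Figure \ref{Fi:exchange} is symmetric in its two block-strand diagrams, an exchange move run backward is again an exchange move, so the reversal of sequence (3) yields a path $X'_- \to X_-$ by exchange moves. Concatenating sequence (4), then sequence (5), then the reversal of sequence (3) produces a sequence $X_+ \to X'_+ \to X'_- \to X_-$ of exactly the required type. The complexity function provided by the full MTWS can simply be discarded, since the Lite version makes no reference to it and the reversal of (3) would in any case spoil strict monotonicity.

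The main obstacle I anticipate is the initial step: verifying rigorously that no destabilization can appear, even as an intermediate move deep inside a long chain. The argument rests on the observation that each of the three sequences is globally non-increasing on braid index, so if a destabilization occurred at any stage the index would fall to $n-1$ and could never recover, contradicting the fact that all terminal diagrams $X_\pm$, $X'_\pm$ share the knot type and so have index at least $n$. Once this monotonicity obstruction is pinned down, everything else --- the reversibility of exchange moves under the block-strand symmetry and the reassembly of the three sequences into one --- is essentially bookkeeping.
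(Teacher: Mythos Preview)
Your proposal is correct and matches the paper's own derivation essentially line for line: the paper also deduces the Lite version by discarding the complexity function, observing that no destabilization can occur when both diagrams already sit at the minimal braid index, reversing sequence~(3), and concatenating the three sequences into one. If anything, you are slightly more explicit than the paper in justifying why destabilizations are ruled out (via the monotonicity argument) and why the reversed exchange moves in~(3) are again exchange moves (via the symmetry of the template), points the paper leaves implicit.
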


\end{remark}

\begin{remark}
It is known that in the cases $m = 1,2,$ and $3$, the sets $\mathfrak{T}(m)$ are empty.  (See \cite{MTWS})
\end{remark}

The MTWS gives us the tools necessary to prove the following key
theorem.

\subsection{Applying the algorithm to closed 3-braids}

\begin{theorem}\label{T:invariant}
Let $D$ and $E$ be closed 3-braid diagrams, and suppose that they
both represent the same knot type.  Suppose further that the braid index of their common knot type is 3.  Then the algorithm gives the same output for $D$ and $E$.
\end{theorem}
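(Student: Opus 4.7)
The plan is to reinterpret the algorithm's output as the image of the closed braid diagram in $V_3$, and then to invoke Theorem \ref{T:MTWS-Lite} to cut the problem down to invariance under a very short list of moves. My first step would be the observation that every operation performed by the algorithm is either a braid isotopy or an application of the skein relation, both of which hold as identities in $H_3$ and hence in $V_3$. Consequently, the final linear combination of basis elements $\hat{v_\lambda}$ produced by the algorithm equals, as an element of $V_3$, the image of the original closed braid, and since $\{\hat{v_\lambda}\}$ is a basis by Theorem \ref{T:dimension}, the coefficients in that combination are determined uniquely by the image. Thus it suffices to prove that $D$ and $E$ determine the same element of $V_3$.

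To obtain this equality, I would feed $D$ and $E$ into MTWS Lite. Because both diagrams have braid diagram index $3$ equal to the braid index of the underlying knot, and because every move supplied by the theorem is non-increasing on braid diagram index while no representative of this knot can have index less than $3$, every intermediate diagram in the resulting sequence also has braid diagram index exactly $3$. Since $\mathfrak{T}(3)$ is empty, each passage in the sequence is realized, up to braid isotopy, either by an exchange move or by an admissible flype. Braid isotopy obviously preserves the class of a closed braid in $V_3$, so the theorem reduces to checking that exchange moves and admissible flypes do as well.

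The remaining work is a direct template-level computation. On each side of an exchange or flype template in the $3$-braid setting, the blocks contain only powers of $\sigma_1$, so I would expand the closure of each side into an $R$-linear combination of the basis elements $\hat{v_\lambda}$ by repeatedly applying the skein identity $T_i^{-1} = A^{-1}T_i - A^{-1}B$ to eliminate negative crossings and then exploiting cyclic invariance of the closure (available in $V_3$ through the relation $ab = ba$) to put the result in standard form. The main obstacle is demonstrating that the two expansions agree for every braiding assignment; I expect to organize this by inducting on the total exponent appearing in the $\sigma_1$-blocks, with cyclic symmetry of the closure and the uniqueness of basis expansions from Theorem \ref{T:dimension} serving as the key bookkeeping tools.
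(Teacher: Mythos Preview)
Your proposal is correct and follows the same overall route as the paper: invoke MTWS Lite, use that $\mathfrak{T}(3)=\emptyset$ and that braid index cannot drop below $3$ to reduce the claim to invariance of the $V_3$-class under $3$-braid exchange moves and flypes, and then verify that invariance directly. Your explicit identification of the algorithm's output with the unique basis expansion of the closed braid in $V_3$ is a clean way to justify why skein-and-isotopy manipulations on the template suffice; the paper uses this implicitly.

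The only substantive difference is in how the flype check is executed. Rather than inducting on the exponents in the $2$-braid blocks, the paper observes that repeated application of the skein relation collapses any $2$-braid block $R$ (and, with identical coefficients, its $180^\circ$-rotated copy on the other side of the template) to a linear combination of the trivial $2$-braid and a single crossing of the same sign as the flype crossing. This leaves exactly two diagrams on each side; one pair is visibly identical and the other pair matches because the remaining $2$-braid blocks $P$ and $Q$ commute. Your induction would work, but the paper's reduce-to-one-crossing trick eliminates the need for any inductive bookkeeping.
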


\begin{proof}
Begin by applying the MTWS to the diagrams $D$ and $E$.  This produces a finite sequence of closed 3-braid diagrams that begins with $D$ and ends with $E$.  Each diagram in the sequence differs from the last, up to braid isotopy, by an exchange move or an admissible flype move.  Since we know that the algorithm is invariant under braid isotopy, it remains only to show that it is invariant under 3-braid exchanges and flypes to prove the theorem.

Since exchange moves are only a special case of flypes in the 3-braid case, it suffices to show invariance under 3-braid flypes.  Refer to figure \ref{Fi:flypeinvar} below.

\begin{figure}[htp]
\centering\scalebox{.65}{\includegraphics{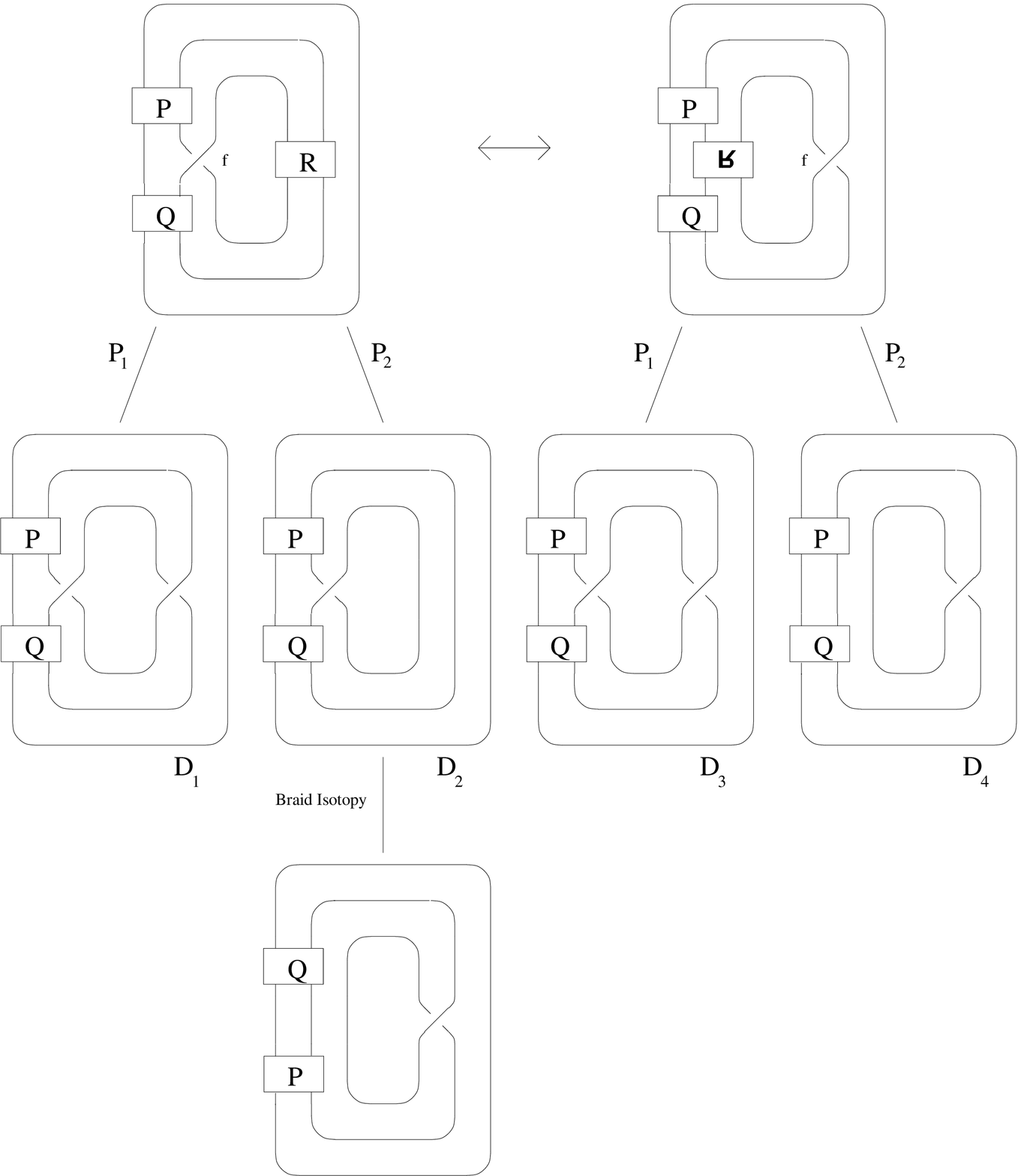}}
\caption{}\label{Fi:flypeinvar}
\end{figure}

We begin simplifying the diagram on the left by applying the skein relation to crossings in the 2-braids $R$ and \rotatebox[origin=c]{180}{\reflectbox{$R$}}.  Applying the skein relation to a crossing in a 2-braid results in a linear combination of a 2-braid with one fewer crossings and a 2-braid with two fewer crossings (assuming the original braid had at least two crossings).  Thus, repeated applications of the skein relation will eventually result in a linear combination of the trivial 2-braid and a 2-braid with one crossing.  This single crossing can be arranged to be either positive or negative by applying the skein relation to the single remaining crossing as necessary.  We follow this procedure with the 2-braid $R$, arranging that the single remaining crossing be of the same sign as the crossing marked $f$.  Since they are 2-braids, $R$ and \rotatebox[origin=c]{180}{\reflectbox{$R$}} are identical.  Thus repeating this process on \rotatebox[origin=c]{180}{\reflectbox{$R$}} will result in exactly the same coefficients in the resulting linear combination.  These coefficients are labeled $P_1$ and $P_2$ in figure \ref{Fi:flypeinvar}.  We label the four diagrams in the two resulting linear combinations $D_1$ through $D_4$.  The diagrams $D_1$ and $D_3$ are visibly identical.  $D_2$ and $D_4$ are also identical.  This follows from the fact that $P$ and $Q$ are 2-braids, and 2-braids are commutative.  Since all we have done is apply the skein relation and use braid isotopy, the algorithm must give the same result for both sides of the 3-braid flype template.  This completes the proof of the theorem.
\end{proof}

Part of the reason this theorem is interesting is because it gives a new invariant of a subset of closed 3-braids, namely the output of the algorithm.  Since one can apply trace functions to the algorithm's output to get the HOMFLY polynomial, this invariant is potentially more sensitive than the HOMFLY and Jones polynomials.  However, this invariant does not produce any new information about the knots with 10 or fewer crossings listed in \cite{Rolfsen} which have braid index 3.

\subsection{A theorem concerning bad crossings}

\begin{theorem}\label{T:parity}
Let $D$ and $E$ be closed braid diagrams.  Label each crossing in
each diagram as \emph{good} or \emph{bad} as in Remark \ref{R:label} above.  If $D$ and
$E$ have the same linear combination associated to them, then the
difference in the number of bad crossings between $D$ and $E$ is
even.
\end{theorem}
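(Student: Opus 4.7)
The plan is to analyze how a single application of the skein relation at a bad crossing affects the bad-crossing count modulo $2$, and then to induct using the structure of the algorithm tree.

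Suppose $D$ contains a bad crossing $c$. The skein relation gives $\widehat{D} = \alpha \widehat{D_1} + \beta \widehat{D_2}$ in $V_n$, where $D_1$ is the switch of $c$ (flipping $\sigma_i$ to $\sigma_i^{-1}$ or vice versa) and $D_2$ is the smoothing. For $D_1$, flipping the over/under information at $c$ does not change the path from the basepoint nor the over/under data at any other crossing, so only the label of $c$ is affected, and it flips from bad to good. Hence $b(D_1) = b(D) - 1$, and the switch branch always flips the parity of $b$.

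The technical heart of the proof is the corresponding claim for the smoothing branch: that $b(D_2) \equiv b(D) - 1 \pmod 2$. Smoothing reconnects the strands at $c$, which reroutes the basepoint walk and can change labels on other crossings. I plan to show that the total number of label changes (counting the removal of the bad crossing $c$ itself) has odd parity. The key input is that smoothing alters the component count of the closure by exactly $\pm 1$: it splits a component if $c$ is a self-crossing of a single component, and merges two components otherwise. Tracing how the rerouted walk visits the remaining crossings, the change in component structure should force the number of non-$c$ label flips to be even, or at least to have a parity pinned down by the merge/split pattern.

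Granted both parity statements, every application of the skein relation flips $b \pmod 2$. The algorithm terminates at diagrams braid-isotopic to basis elements $\hat{v}_\lambda$, each of which (for the inherited basepoint) has $b = 0$. A straightforward induction then shows that every path from the root $D$ to a leaf in the algorithm tree has length of parity $b(D) \pmod 2$. Since $D$ and $E$ yield the same linear combination in the basis $\{\hat{v}_\lambda\}$, their algorithm trees terminate in the same multiset of basis elements with identical coefficients; inspecting the contributions of each path (coefficients $A$ or $A^{-1}$ for switches and $B$ or $-A^{-1}B$ for smoothings) lets one read off $b(D)$ and $b(E)$ modulo $2$ from the common output. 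The two values agree, so the difference is even.

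The main obstacle is the smoothing-parity claim of paragraph three: controlling how many labels flip when the basepoint walk is rerouted at a smoothed bad crossing. This is a delicate combinatorial-topological statement about how local reconnections propagate through the first-visit structure of the diagram, and I expect the $\pm 1$ change in component count under smoothing to be the lever that pins down the parity.
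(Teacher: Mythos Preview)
Your proposal has a genuine gap, and it is precisely the step you flag as the ``main obstacle'': the claim that smoothing a bad crossing changes the bad-crossing count by an odd number. You do not prove this, and it is not clear it is even well-posed in the way you use it. When the algorithm smooths a crossing, it does \emph{not} relabel the remaining crossings from scratch; the labels already assigned are retained and never revisited. So the quantity $b(D_2)$ obtained by freshly pre-labelling the smoothed diagram (which is what your parity claim is about) is not the quantity that governs the remaining depth of the algorithm tree. Even if one could make sense of it, controlling how many labels flip under rerouting is exactly the kind of global combinatorial statement that tends to fail without a much more careful argument than an appeal to the $\pm 1$ change in component count. Your final step --- reading $b(D)\bmod 2$ off the common output --- is also left vague, and any concrete way of doing it essentially collapses to the paper's argument below.

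The paper's proof avoids the smoothing branch entirely, and this is the idea you are missing. Every smoothing edge in the algorithm tree carries a factor of $B$ (either $B$ or $-A^{-1}B$), while every switching edge carries only $A^{\pm 1}$. Hence there is exactly one leaf whose coefficient is free of $B$: the leaf reached by the all-switch path. Along that path each positive bad crossing contributes $A$ and each negative bad crossing contributes $A^{-1}$, so the unique $B$-free monomial in the output is $A^{k}$ with $k = p_D - n_D$. Since $D$ and $E$ have the same output, $p_D - n_D = p_E - n_E$, and reducing modulo $2$ gives $b_D = p_D + n_D \equiv p_E + n_E = b_E$. No analysis of what happens to labels under smoothing is needed.
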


\begin{proof}
We claim that each term in each coefficient in the linear
combination has at least one power of $B$ in it with the exception
of exactly one, which looks like $A^k$.  We allow the possibility
that $k$ is zero.

To see why this is so, notice that the algorithm is merely a
systematic way of using the skein relation to change every bad
crossing in the diagram into a linear combination of a diagrams with
good crossings or no crossings.  When doing so, changing the
crossing to the opposite crossing introduces an $A$ or $A^{-1}$
term, whereas eliminating the crossing results in either a $B$ or
$-BA^{-1}$ term.  Using the binary tree formulation of the
algorithm, there is exactly one leaf arrived at by following only
edges marked $A$ or $A^{-1}$.  The other leaves must have some power
of $B$ in their associated term, since once a $B$ is introduced it
cannot be canceled.  The $k$ in the resulting $A^k$ term represents
exactly the difference between the number of positive bad crossings
and the number of negative bad crossings in the original diagram.

Let $p_D$ be the number of positive bad crossings in the diagram $D$, and let $n_D$ be the number of negative bad crossings in $D$.  Then $b_D = p_D + n_D$ is the total number of bad crossings in $D$.  Define $p_E$, $n_E$, and $b_E$ analogously for the diagram $E$.

By assumption, $D$ and $E$ have the same linear combination
associated to them.  We know that there is a unique $A^k$ term in each
linear combination obtained from the algorithm.  If we label the exponent of $A$ in $D$ and $E$ as $k_D$ and $k_E$ respectively, then we have the following:

\begin{align}
k_D &= k_E \notag \\
p_D - n_D &= p_E - n_E \notag \\
p_D + n_D &\equiv p_E + n_E \mod 2 \notag \\
b_D &\equiv b_E \mod 2 \notag
\end{align}

Thus the difference in the number of bad crossings between $D$ and
$E$ must be even.
\end{proof}

\section{Proof of the main theorem}\label{S:proof}
Armed with Theorem \ref{T:invariant} and Theorem \ref{T:parity}, the
proof of the main theorem is straightforward.

Let $D$ be a diagram of an arbitrary closed 3-braid.  The braid index of the knot represented by $D$
can be either 1, 2, or 3.

If the braid index of $D$ is 1, then $D$ is a diagram of the unknot.
The nugatory crossing conjecture is known to be true for the unknot,
following from an argument of Scharlemann and Thompson.
The case of braid index 2 is also known, since the nugatory crossing
conjecture is known to be true for 2-bridge knots and knots of braid
index 2 have bridge number 2.  Thus we need only concern ourselves
with the case where the braid index of $D$ is 3.

The idea of the proof for braid index 3 is to show that no crossing
in the diagram can actually be nugatory, since the result follows
from this immediately.  For the rest of the proof, we will tacitly
assume that the braid index of any diagram in question is 3. Under
this assumption, Theorem \ref{T:invariant} states that for closed 3-braids the
associated linear combination of basis diagrams is actually an invariant. Specifically, if two closed 3-braids have different
linear combinations associated to them, then they must represent
different knot types.  So if $D'$ is a diagram obtained from $D$ by
changing a single crossing, then we need only demonstrate that $D$
and $D'$ have different associated linear combinations.  But this
follows precisely from Theorem \ref{T:parity}, since a single crossing change either adds or subtracts one from the total number of bad crossings in the diagram (depending on choice of orientation and basepoint).  Thus $D$ and $D'$ do not represent the same knot type, and so the crossing in question could not have been nugatory.

\section{Conclusions and future work}\label{S:future}

The main theorem implies that if a closed 3-braid diagram contains a nugatory crossing, then the braid index of the knot it represents cannot be 3, since such diagrams have no nugatory crossings.  The braid index can therefore only be 1 or 2.  It is perhaps not surprising that such diagrams contain nugatory crossings, since by Markov's Theorem we then have a sequence
$$\text{1- or 2-braid diagram } = b_0 \rightarrow \ldots \rightarrow b_n = \text{ 3-braid diagram}$$
which must necessarily contain a stabilization, and the crossing introduced by a stabilization is nugatory.  It is easy to see that if one takes a crossing disk for the crossing introduced by a stabilization, it can be extended to a twice-punctured sphere which encapsulates the extra loop introduced by the stabilization.

This explanation does not, in itself, prove the conjecture for closed 3-braid diagrams because we would somehow have to prove that no crossings in the diagram are nugatory except for those which can be destabilized.  For diagrams of closed 3-braids which are explicitly drawn as stabilized 1- or 2-braids, this is straightforward.  But for in the general case of a closed 3-braid diagram representing a knot of braid index 1 or 2, it is a difficult problem.  A good possibility for future work would be to solve this problem so that the methods employed in this paper could provide a proof for all closed 3-braids.  Until then, the results of Scharlemann and Torisu suffice to fill this gap.

The main theorem does not use the full strength of Theorem \ref{T:parity}.  Theorem \ref{T:parity} states that any diagrams which have the same output under the algorithm must differ in the number of bad crossings by an even number.  In particular, if we start with a closed braid diagram (with any number of strands), then changing an odd number of distinct crossings in the diagram must result in a new diagram which does not have the same output under the algorithm.  But the algorithm is an invariant of closed 3-braids which cannot be represented as closed 1- or 2-braids.  So we can conclude the following:

\begin{theorem}\label{T:odd}
      If $D$ is a closed 3-braid diagram and the knot represented by $D$ has braid index 3, then any odd number of distinct crossing changes in the diagram must change the knot type.
\end{theorem}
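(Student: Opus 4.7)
The plan is to deduce Theorem \ref{T:odd} from Theorem \ref{T:invariant} and Theorem \ref{T:parity} in the same spirit as the proof of the Main Theorem, this time tracking what happens when several crossings are changed at once. I would argue by contradiction: suppose $D$ is a closed 3-braid diagram whose knot type has braid index $3$, and suppose that $D'$ is obtained from $D$ by changing an odd number $k$ of distinct crossings while representing the same knot type as $D$. Then $D'$ is also a closed 3-braid diagram representing a knot of braid index $3$, so Theorem \ref{T:invariant} guarantees that the algorithm produces identical linear combinations for $D$ and $D'$, and Theorem \ref{T:parity} then forces the numbers of bad crossings in $D$ and $D'$ to have the same parity.

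The main thing to verify is that each individual crossing change flips the good/bad label of exactly one crossing, namely the one that was changed. Using the uniform labeling procedure described in Remark \ref{R:label}, one fixes a basepoint and follows the knot, labeling each crossing on first encounter according to whether one travels along the over- or under-strand. A crossing change affects neither the underlying planar projection of the diagram (viewed as a $4$-valent graph) nor the induced braid permutation, since $\sigma_i$ and $\sigma_i^{-1}$ give the same transposition. Consequently, when we run the labeling procedure on $D'$ from the same basepoint, we traverse the same sequence of crossings in the same order and arrive at each crossing on the same braid strand; the only datum that changes is the over/under relation at the altered crossing, so only that crossing's label flips.

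Putting this together, changing $k$ distinct crossings flips exactly $k$ labels. If $a$ labels flip from good to bad and $b$ from bad to good, then the number of bad crossings changes by $a - b$, which has the same parity as $a + b = k$. Since $k$ is odd, the parities of the bad-crossing counts of $D$ and $D'$ must differ, contradicting the conclusion of the first paragraph. The only subtlety in the whole argument is the strand-tracking step in the middle paragraph; once one notices that both the planar projection and the braid permutation are invariant under crossing changes, this reduces to a routine verification, and the main theorem of the section follows.
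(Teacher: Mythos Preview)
Your proof is correct and follows the same route as the paper, which deduces the result directly from Theorems~\ref{T:invariant} and~\ref{T:parity} together with the observation that a single crossing change alters the bad-crossing count by exactly one. Your middle paragraph supplies a careful justification of that observation (via invariance of the planar shadow and the braid permutation) that the paper leaves implicit.
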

This certainly need not be true of an even number of crossing changes, as evidenced by diagrams related by exchange moves.

Theorem \ref{T:odd} unfortunately does not generalize to knots in $S^3$, and this is the reason we must take such care with its hypotheses.  There is a much wider variety of possible crossing changes available for a knot in $S^3$ than for a single diagram of a knot.  An would be to take a knot in $S^3$ which has braid index 3 and simply introduce a nugatory crossing via a type I Reidemeister move.  Such a move will not affect the braid index of the knot since it does not change the knot type.  But now there is a single crossing which, when changed, will not change the knot type.

A more instructive example involves a theorem of Hirasawa and Uchida \cite{HUGordian}.  To state it properly, we need to define Gordian distance and the Gordian complex of knots.

\begin{definition}
Given two knots $K$ and $L$, we define the Gordian distance $d_G(K,L)$ between them to be the minimum number of crossing changes required to change $K$ into $L$.
\end{definition}

Since any knot can be unknotted by some sequence of crossing changes, the Gordian distance is always defined for any pair of knots.  It is easily seen that $d_G$ is a metric.

\begin{definition}
The Gordian complex of knots is a simplicial complex $\mathcal{G}$ whose vertex set is the set of isotopy classes of knots in $S^3$.  In addition, $n+1$ vertices in $\mathcal{G}$ form an $n$-simplex if the Gordian distance between any two of the vertices is 1.
\end{definition}

Several interesting results are known about the Gordian complex.  For instance, the fact that $d_G(K,L)$ is defined for any two knots $K$ and $L$ implies that $\mathcal{G}$ is connected.  In \cite{BaaderGordian}, Baader proved that for any pair of vertices in $\mathcal{G}$ which have a Gordian distance of 2, there are an infinite number of distinct paths of length 2 between them.

Hirasawa and Uchida proved the following theorem.

\begin{theorem}\label{T:Gordian}
Every vertex in $\mathcal{G}$ is contained in an infinite dimensional simplex of $\mathcal{G}$.  In other words, given any knot $K_0$, there is an infinite family of knots $K_0,K_1,\ldots$ with the property that $d_G(K_i,K_j) = 1$ for distinct $i$ and $j$.
\end{theorem}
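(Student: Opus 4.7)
The strategy is to build the infinite simplex explicitly, reducing first to the case $K_0 = U$ (the unknot) by a connected-sum trick. Namely, if $U = J_0, J_1, J_2, \ldots$ is a family of pairwise distinct knots with $d_G(J_i, J_j) = 1$ for all $i \ne j$, then for any base knot $K_0$ the family $\{K_0 \# J_n\}_{n \ge 0}$ inherits the required property: a crossing change realizing $d_G(J_i, J_j) = 1$ is supported in a small $3$-ball meeting $J_i$ in two arcs, which can be placed disjoint from the $K_0$-summand, giving $d_G(K_0 \# J_i, K_0 \# J_j) \le 1$, while uniqueness of prime decomposition guarantees $K_0 \# J_i \ne K_0 \# J_j$. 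So it suffices to construct the family rooted at the unknot.

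For this, I would look for a local tangle family: a sequence of 2-string tangles $T_n$ with the property that a numerator-style closure of $T_n$ into a trivial loop yields a knot $J_n$ (with $T_0$ trivial giving $U$), and for every pair $n \ne m$ there is a single crossing in a suitable diagram of $J_n$ whose change, followed by isotopy, produces $J_m$. Requiring that each $J_n$ become $U$ by a single crossing change forces the $J_n$ to have unknotting number $1$, and the pairwise distance-$1$ condition forces a very symmetric design. A promising candidate is a pretzel-style or rational-tangle family equipped with extra ``switching'' crossings, one for each ordered pair $(n,m)$, arranged so that flipping the switching crossing $c_{n,m}$ carries $J_n$ to $J_m$ after a prescribed isotopy. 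Pairwise distinctness of the resulting $J_n$ would then be verified using a classical invariant such as the Alexander or Jones polynomial, which takes infinitely many distinct values on natural twist-type families.

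The main obstacle is precisely this tangle design: a naive twist or clasp family gives $d_G(J_n, J_m) = |n - m|$, which is much too coarse. Arranging that \emph{every} ordered pair of members is related by a single crossing change requires engineering a tangle with many independently switchable crossings whose individual flips induce the correct transitions among the $J_n$. Constructing such a tangle and verifying the relevant isotopies, most likely via Conway's rational tangle calculus or a direct geometric recipe, is the technical heart of the proof and where I would expect to spend essentially all of the effort.
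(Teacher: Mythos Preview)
The paper you are working from does not prove this theorem; it is quoted as a result of Hirasawa and Uchida (the reference \cite{HUGordian}) and invoked only to contrast with Theorem~\ref{T:odd}. There is therefore no in-paper proof to compare your attempt against.

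Evaluating your proposal on its own: the connected-sum reduction to $K_0 = U$ is valid (the implication $K_0 \# J_i = K_0 \# J_j \Rightarrow J_i = J_j$ is cancellation in the knot monoid, which follows from unique prime decomposition), but it buys very little, since constructing an infinite clique in $\mathcal{G}$ containing the unknot is essentially the original problem with one vertex pinned down. After the reduction you stop short of a proof: you correctly observe that naive twist or clasp families give $d_G(J_n, J_m) = |n-m|$ rather than $1$, and you correctly locate the difficulty in designing a tangle family in which \emph{every} pair is related by a single crossing switch, but you do not actually produce such a family. Hirasawa and Uchida's argument proceeds differently and avoids the reduction: it is inductive, showing that any finite simplex in $\mathcal{G}$ extends to one of dimension one higher via an explicit local-tangle modification, with the new vertex distinguished from the old ones by a polynomial invariant. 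That explicit construction is precisely the ``technical heart'' you yourself flag as missing, and without it what you have is an outline rather than a proof.
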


Theorem \ref{T:Gordian} implies that an odd number of distinct crossing changes in a knot can indeed produce the same knot, if we consider the crossing changes in $S^3$ rather than in a fixed diagram of the knot.  This stands in contrast to Theorem \ref{T:odd}, implying that not everything which can be achieved by crossing changes in $S^3$ can be achieved within a single projection of a knot.  A possibility for future work would be to investigate the extent of this difference.

Unfortunately, the invariant presented here does not extend easily to closed 4-braids and beyond.  There are examples of closed 4-braids which differ by an exchange move for which the algorithm gives distinct results.  See figure \ref{Fi:nonexample}.  However, it is possible that the extension could still hold in the case of knots (rather than links, as is the case in figure \ref{Fi:nonexample}).

\begin{figure}[h]
\centering\scalebox{.50}{\includegraphics{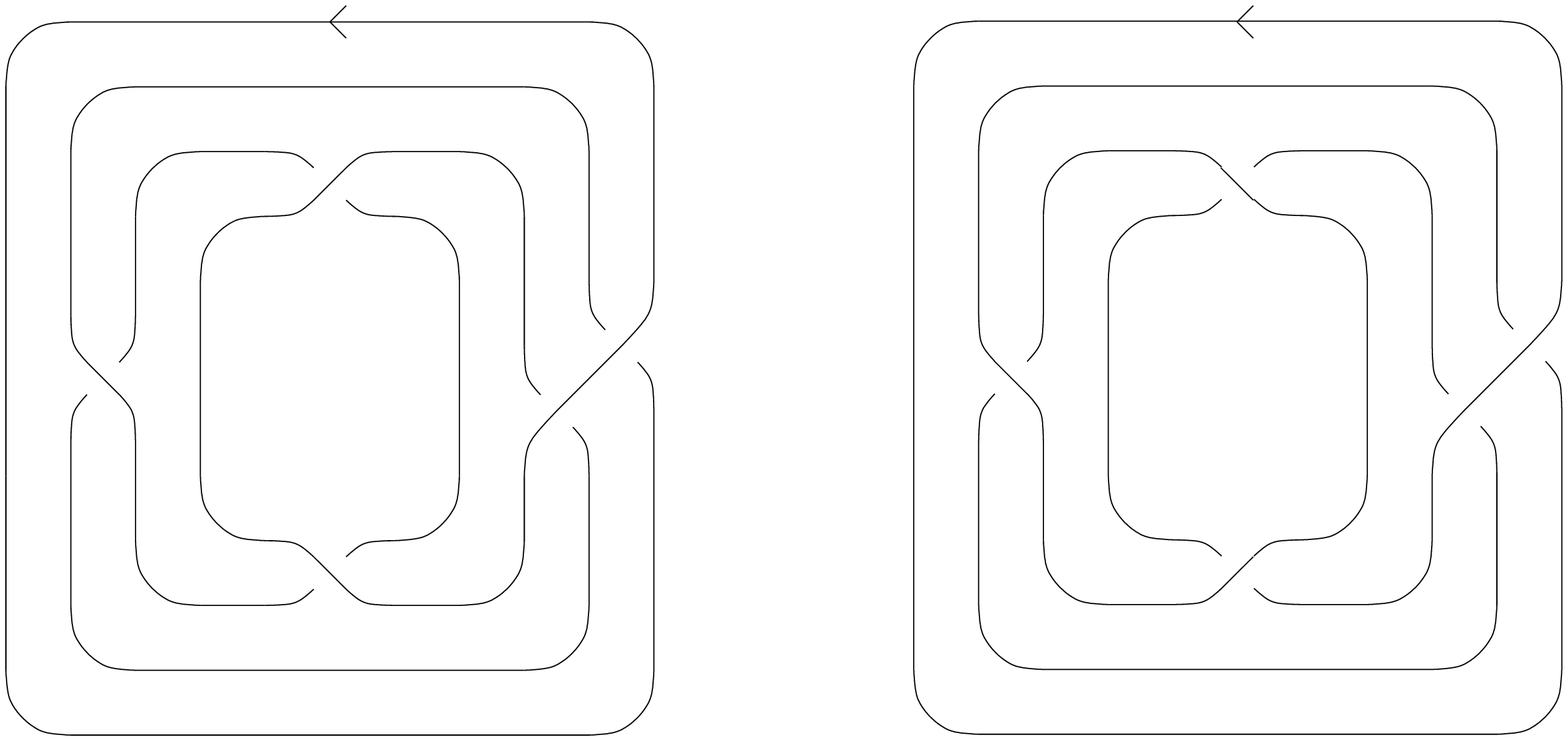}}
\caption{4-braids which are exchange equivalent but do not have the same output under the algorithm}\label{Fi:nonexample}
\end{figure}

Another problem is the fact that, as of this writing, there is no clear tabulation of the templates in the set $\mathfrak{T}(4)$.  An analog of Theorem \ref{T:invariant} is impossible without knowing exactly which moves are needed in the MTWS for 4-braids.  Birman and Menasco remark in \cite{MTWS} that there should be no real difficulty in determining the contents of $\mathfrak{T}(4)$, so this may be only a matter of time.

\nocite{BirmanBraid}
\nocite{MTWS2}

\newpage
\begin{singlespace}
\bibliographystyle{plain}
\bibliography{ChadBib}
\end{singlespace}
\end{document}